\renewcommand{\d}{\delta}
\newcommand{\e}{\varepsilon}
\newcommand{\R}{\mathbb R}
\newcommand{\eqdef}{\stackrel{\mathrm{def}}{=}}
\renewcommand{\le}{\leqslant}
\renewcommand{\ge}{\geqslant}
\renewcommand{\leq}{\leqslant}
\newtheorem{theorem}{Theorem}[section]
\newtheorem{lemma}[theorem]{Lemma}
\newtheorem{corollary}[theorem]{Corollary}
\theoremstyle{remark}
\newtheorem{remark}[theorem]{Remark}
\theoremstyle{definition}
\renewcommand{\subset}{\subseteq}
\renewcommand{\supset}{\supseteq}
\newcommand{\C}{\mathbb C}
\renewcommand{\H}{\mathbb H}
\newcommand{\N}{\mathbb N}
\renewcommand{\setminus}{\smallsetminus}
\newcommand{\f}{\phi}
\begin{document}

\title[Infinite dimensional doubling subset of $L_p$]{
A doubling subset of $L_p$ for $p>2$ that is inherently
infinite dimensional}

\author{Vincent Lafforgue}
\address{Laboratoire de Math\'ematiques - Analyse, Probabilit\'es, Mod\'elisation - Orl\'eans (MAPMO)
UMR CNRS 6628, Universit\'e d'Orl\'eans
Rue de Chartres, B.P. 6759 - 45067 Orl\'eans cedex 2} \email{vlafforg@math.jussieu.fr}

\author{Assaf Naor}
\address{Courant Institute of Mathematical Sciences, New York University, 251 Mercer Street, New York NY 10012, USA}
\email{naor@cims.nyu.edu}

\begin{abstract}
It is shown that for every $p\in (2,\infty)$ there exists a doubling
subset of $L_p$ that does not admit a bi-Lipschitz embedding into
$\R^k$ for any $k\in \N$.
\end{abstract}

\maketitle

\section{Introduction}

Given $K\in [1,\infty)$, a metric space $X$ is said to be
$K$-doubling if every ball in $X$ can be covered by at most
$K$ balls of half its radius. $X$  is said to be doubling if it is
$K$-doubling for some $K\in (0,\infty)$.

Lang and Plaut~\cite{LP01} asked whether or not every doubling
subset of Hilbert space admits a bi-Lipschitz embedding into $\R^k$
for some $k\in \N$. We refer to~\cite[Sec.~1.1]{NN12} for further
discussion on the ramifications of this question, as well as a
construction of a doubling subset of Hilbert space that conceivably does not to admit a bi-Lipschitz embedding into $\R^k$ for
any $k\in \N$. While the validity of this suggestion of~\cite{NN12}
remains open, and hence also the Lang--Plaut question remains open,
here we show that a variant of the method that was proposed
in~\cite{NN12} can be used to prove that the analogue of the
Lang--Plaut problem with Hilbert space replaced by $L_p$ for $p\in
(2,\infty)$ has a negative answer.

\begin{theorem}\label{thm:main1}
For every $p\in (2,\infty)$ there exists a doubling subset of $L_p$
that does not admit a bi-Lipschitz embedding into $\R^k$ for any
$k\in \N$.
\end{theorem}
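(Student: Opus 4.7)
The plan is to reduce Theorem \ref{thm:main1} to the existence of a sequence of finite metric spaces $(M_n,d_n)_{n\in\N}$ with the following three simultaneous properties: (a) each $M_n$ is $K$-doubling for a constant $K$ independent of $n$; (b) each $M_n$ admits a bi-Lipschitz embedding $f_n\colon M_n\to L_p$ with distortion at most some $D=D(p)$, independent of $n$; and (c) for every fixed $k\in\N$, the minimal bi-Lipschitz distortion required to embed $M_n$ into $\R^k$ tends to infinity with $n$. Given such a sequence, a standard rescale-and-glue construction---scaling the $f_n(M_n)$ so that their diameters decay geometrically and placing them in disjointly supported subspaces of $L_p$ attached to a suitable rooted ``comb'' of base points---yields a single subset of $L_p$ which is still doubling (with a slightly worse constant $K'$) and which contains each $M_n$ as a bi-Lipschitz sub-object. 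By property (c), no such subset can bi-Lipschitzly embed into any $\R^k$, which is the content of the theorem.

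For the sequence $(M_n)$ itself I would follow the variant of \cite{NN12} alluded to in the introduction. The natural candidate is a finite graph-theoretic object, most likely a tree-like or Laakso/Heisenberg-type combinatorial model, equipped with its shortest-path metric; the bounded branching and locally homogeneous structure of such models make the doubling estimate (a) a direct volume-growth computation. Property (b), where the hypothesis $p>2$ enters, would be established by constructing $f_n$ as a carefully weighted Fr\'echet-style embedding into an $\ell_p$-sum over natural observables on $M_n$, and then exploiting the uniform convexity of $L_p$ with power-type-$p$ modulus: this strictly richer geometry, unavailable in Hilbert space, is what allows the local embeddings of individual substructures of $M_n$ to be combined into one globally bi-Lipschitz map with $n$-independent distortion. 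This is precisely the step that \cite{NN12} leaves open when the target is $L_2$, which is why that paper can only conjecture, rather than settle, the Lang--Plaut question.

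The main obstacle, and the technical heart of the argument, is property (c): the uniform non-embeddability into any finite-dimensional Euclidean space. This demands a metric invariant of $\R^k$ that is robust under bi-Lipschitz distortion but fails inside $M_n$ as $n\to\infty$. A natural approach is to exhibit inside each $M_n$ a combinatorial witness---for instance an approximately equilateral subset of cardinality growing with $n$, or a system of branching geodesics of growing combinatorial complexity, or a finite-scale Markov-convexity-type obstruction---whose realization inside any $\R^k$ is forbidden by a volumetric or packing argument, using that $\R^k$ admits only $O_k(C^k)$ pairwise $\Omega(1)$-far unit directions and is rigidly constrained by Euclidean volume comparison. Engineering the combinatorial design of $M_n$ so that it simultaneously supports such an $\R^k$-forbidding configuration \emph{and} the uniform $L_p$-embedding of (b) is where the variant of the \cite{NN12} construction must be tailored with care, and where I expect the bulk of the proof to lie.
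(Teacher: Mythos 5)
Your reduction skeleton is the same as the paper's: Theorem~\ref{thm:main1} is indeed deduced from a finitary statement (Theorem~\ref{thm:main 3}) about a sequence of finite doubling images in $L_p$, glued by the standard disjoint-union/rescaling argument. But both of the substantive ingredients you propose are off in ways that would sink the proof.

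First, your requirement (b) --- that $M_n$ embed into $L_p$ with distortion $D(p)$ \emph{independent of $n$} --- is neither what the paper proves nor achievable for its spaces, and the way you would use it hides the key point. The spaces are balls $X_n$ in the discrete Heisenberg group, and by~\cite{LN12} any embedding of $X_n$ into $L_p$ ($p>2$) incurs distortion $\gtrsim_p(\log n)^{1/p}$; the maps $f_{n,p}$ constructed in the paper match this, i.e.\ they have distortion $\asymp(\log n)^{1/p}\to\infty$. Consequently the doubling property of the image $f_{n,p}(X_n)$ \emph{cannot} be inherited from the doubling property of $X_n$ through a bounded-distortion map, as your plan implicitly assumes; it is proved directly, from the exact homogeneity $\|S(\d_\theta x)-S(\d_\theta y)\|_p=\theta^{1-\e}\|S(x)-S(y)\|_p$ of an explicit Riesz-type kernel (or Schr\"odinger-representation cocycle), which makes the image Ahlfors regular. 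The role of $p>2$ is also not uniform convexity of the target in the sense you describe: the theorem follows because the $L_p$ upper bound $(\log n)^{1/p}$ is asymptotically smaller than the Hilbertian lower bound $(\log n)^{1/2}$, and $\ell_p^k$ embeds into $\ell_2$ with distortion $k^{1/2-1/p}$, so any embedding of $X_n$ into $\R^k$ with $k$ fixed forces distortion $\gtrsim(\log n)^{1/2-1/p}/k^{1/2-1/p}\to\infty$. For $p\le 2$ this gap closes, which is exactly why that case remains open.

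Second, the obstructions you propose for (c) cannot work for doubling spaces. A $K$-doubling metric space contains equilateral (or approximately equilateral) subsets of cardinality at most $K^{O(1)}$, so no such witness of growing cardinality exists inside the $M_n$; and more fundamentally, by Assouad's theorem every snowflake of a doubling space \emph{does} admit a bi-Lipschitz embedding into some $\R^k$, so doubling spaces satisfy the same packing and volume-comparison constraints as finite-dimensional Euclidean space --- no purely volumetric or packing-theoretic invariant can separate them from $\R^k$. The actual obstruction in the paper is analytic and specific to the Heisenberg group: quantitative vertical-versus-horizontal Poincar\'e-type inequalities (\cite{ANT10}, \cite{LN12}, and~\cite{ckn} for $L_1$), of the form displayed in~\eqref{eq:LN quote}, which force the $(\log n)^{\min\{1/2,1/q\}}$ lower bounds into $L_q$. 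Without an invariant of this kind your step (c) has no proof, and with the witnesses you suggest it is provably false.
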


We thank Ofer Neiman for asking us the question that
Theorem~\ref{thm:main1} answers. In~\cite{BGN13} Bartal, Gottlieb and Neiman concurrently found a
construction that also yields Theorem~\ref{thm:main1}; their
(combinatorial) construction is entirely different from our
(analytic) construction. The potential validity of Theorem~\ref{thm:main1} for $p\in (1,2]$ remains an open question, while for $p=1$ stronger results are known; see Remark~\ref{rem:finite dim} below.

Theorem~\ref{thm:main1} is a special case of the following result.

\begin{theorem}\label{thm:main2}
For every $p\in (2,\infty)$ there exists a doubling subset $D_p$ of
$L_p$ that does not admit a bi-Lipschitz embedding into $L_q$ for
any $q\in (1,p)$. Furthermore, there exists $p_0\in (2,\infty)$ such
that  $D_p$ does not admit a bi-Lipschitz embedding into $L_1$ for
every $p\in [p_0,\infty)$.
\end{theorem}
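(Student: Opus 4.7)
The plan is to construct $D_p$ as a scale-mixed union of finite ``witnesses'' $\{X_n\}_{n=1}^\infty\subset L_p$ that are asymptotically tight for a quantitative $L_p$-valued inequality which any embedding into $L_q$ is compelled to violate. A natural candidate is a sequence built iteratively from small building blocks---for instance as images of discrete tori $\Z_{m_n}^{k_n}$ in $L_p$, possibly assembled via a zigzag-style product and realized using independent $p$-stable random variables---with parameters $m_n,k_n\to\infty$ chosen so that the embedding saturates the relevant nonlinear (metric) cotype-$p$ estimate for $L_p$, while any $L_q$-target with $q<p$ would be forced to obey a strictly stronger estimate than it can deliver.

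The first step is to rescale the $X_n$ so the $n$-th copy has diameter $2^{-n}$, to place the rescaled copies at well-separated locations along an auxiliary direction of $L_p$, and to glue them along a Cantor-type spine. A scale-by-scale counting then shows that $D_p$ is doubling, provided $|X_n|$ grows sufficiently slowly; this imposes an upper bound on $k_n$ that one must check is compatible with the lower-bound argument below. The trivial embedding $D_p\hookrightarrow L_p$ is free of charge since the rescaled pieces already lie in $L_p$.

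The second step is the non-embeddability. Suppose $f\colon D_p\to L_q$ is $C$-bi-Lipschitz. Restricting to each $X_n$ yields an embedding of $X_n$ into $L_q$ of distortion at most $C$. Applying the nonlinear cotype-$q$ inequality of Mendel--Naor to this restriction produces an averaged upper bound on long-range differences of $f$ along each coordinate direction of the underlying torus. On the other hand, by construction of $X_n$ as a tight cotype-$p$ example inside $L_p$, these same differences are forced to be quantitatively larger than the cotype-$q$ bound permits, by a factor that diverges in $n$. Letting $n\to\infty$ forces $C=\infty$, contradicting bi-Lipschitz embeddability into $L_q$.

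The main technical obstacle is the simultaneous calibration of three rates: the speed at which $X_n$ saturates the cotype-$p$ inequality (which drives the lower bound on $C$), the cardinality $|X_n|$ (which must stay small enough for the union to remain doubling), and the rescaling and separation (which must preserve both the local geometry of each $X_n$ and the global doubling constant). For the final clause of the theorem concerning $L_1$, a separate refinement is needed since $L_1$ has cotype $2$ and hence satisfies every cotype-$q$ inequality with $q\ge 2$ trivially; the lower bound must instead come from a sharper invariant specific to $L_1$ (e.g.\ a quantitative failure of Markov type or an enhanced nonlinear spectral estimate), and will only produce a distortion blow-up once $p$ is taken large enough to overwhelm the $L_1$-specific losses in that estimate, thereby yielding the threshold $p_0\in (2,\infty)$.
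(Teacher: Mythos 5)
Your high-level skeleton is the same as the paper's: reduce to a sequence of finite witnesses $X_n$ that embed into $L_p$ as uniformly doubling sets with some controlled distortion loss $D_1(n)$, prove a lower bound $D_2(n)\to\infty$ (with $D_2(n)/D_1(n)\to\infty$) for embedding $X_n$ into $L_q$, and glue rescaled copies into a single doubling set $D_p$ (the paper uses the standard ``disjoint union'' argument rather than a Cantor spine, but that part is interchangeable). The gap is in the choice of witnesses and invariant, and it is not a mere calibration issue that can be deferred. Metric cotype lower bounds for discrete tori require the $\ell_\infty^{k}$ structure with $k\to\infty$: the distortion lower bound for $[m]_\infty^{k}$ into a space of cotype $q<p$ is driven by the dimension $k$, while the doubling constant of such a torus grows like $2^{\Theta(k)}$. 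Once you cap $k=O(1)$ to keep the union doubling, $[m]_\infty^{k}$ embeds into $\R^{k}$, hence into every $L_q$, with distortion $O_k(1)$, and your lower bound does not diverge. So the three rates you list cannot be simultaneously calibrated for this family; the ``zigzag-style product'' is too vague to repair this. Your treatment of the $L_1$ clause also inverts the logic of the invariant: a target having cotype $2$ is what \emph{creates} a cotype obstruction (it cannot contain tori violating the cotype-$2$ inequality), so within your framework $L_1$ would be at least as obstructed as $L_2$ and no threshold $p_0$ would appear---a sign that the invariant does not match the shape of the statement.

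The paper's witnesses are balls of cardinality $n$ in the discrete Heisenberg group $\H$. These come with a built-in one-parameter family of dilations, which is what makes the snowflaked embedding $x\mapsto N(x^{-1}\cdot)^{-\alpha}-N(\cdot)^{-\alpha}$ (Theorem~\ref{thm:main4}) land on a uniformly ($2^{16}$-)doubling subset of $L_p$ with distortion only $D_1(n)\asymp(\log n)^{1/p}$ after taking $\e=1/\log n$. The non-embeddability invariant is not cotype but the quantitative vertical-versus-horizontal estimates for Heisenberg balls: $D_2(n)\gtrsim_q(\log n)^{\min\{1/2,1/q\}}$ for $q\in(1,\infty)$ from \cite{ANT10,LN12}, and $(\log n)^{c}$ for $q=1$ from \cite{ckn}. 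Since $\min\{1/2,1/q\}>1/p$ whenever $1<q<p$ and $p>2$, the ratio $D_2(n)/D_1(n)$ diverges; for $q=1$ one needs $c-1/p>0$, which is exactly where the threshold $p_0=1/c$ in the statement comes from. Both the uniform doubling of the image and the divergent lower bound are properties you would need to \emph{prove} for any alternative family, and neither is supplied by your proposal.
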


Theorem~\ref{thm:main2} is a formal consequence of the following
finitary result.

\begin{theorem}\label{thm:main 3}
There exists a universal constant $K\in [1,\infty)$ and for every
$n\in \N$ there exists an $n$-point metric space $(X_n,d_{X_n})$
with the following properties. For every $n\in \N$ and $p\in
[2,\infty)$ there exists a mapping $f_{n,p}:X_n\to L_p$ such that
$f_{n,p}(X_n)\subset L_p$ is $K$-doubling, and
$$
\forall\, x,y\in X_n,\quad d_{X_n}(x,y)\le \|f_{n,p}(x)-f_{n,p}(y)\|_p\lesssim (\log n)^{1/p}d_{X_n}(x,y).
$$
Moreover, for every $q\in (1,\infty)$, any embedding of $X_n$ into
$L_q$ incurs distortion at least $c(q)(\log n)^{\min\{1/2,1/q\}}$,
where $c(q)\in (0,\infty)$ may depend only on $q$. Any embedding of
$X_n$ into $L_1$ incurs distortion at least $(\log n)^c$ for some
universal constant $c\in (0,1/2]$.
\end{theorem}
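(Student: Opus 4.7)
My plan is to construct $X_n$ as the shortest-path metric on a carefully chosen graph on roughly $n$ vertices: most plausibly a Cayley graph of a finite nilpotent group such as a quotient of the discrete Heisenberg group, or else a recursively defined Laakso/zigzag-type graph (the macros in the preamble suggest some such recursive construction). The guiding principle is that $X_n$ supports a natural scale-indexed harmonic analysis --- characters, Laplacian eigenfunctions, or wavelets $\{\varphi_k\}$ at geometric scales $\{r_k\}$ --- and this harmonic structure serves as the common engine driving both the upper-bound embedding into $L_p$ and the lower-bound obstruction to embedding into $L_q$. Note that the abstract space $X_n$ must be the same for every $p$, while the map $f_{n,p}$ may be tailored to $p$; only the weights in the harmonic expansion, not the underlying system, should depend on $p$.

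The embedding would be defined as a weighted sum $f_{n,p}(x)=\sum_{k} w_{k}\,\varphi_{k}(x)\,\mathbf{e}_{k}$, where each $\varphi_{k}$ is a scale-$r_{k}$ wavelet on $X_n$, the $\mathbf{e}_{k}$ are pairwise orthogonal vectors sitting in disjoint copies of $L_p$, and the weights $w_{k}$ balance the contributions of the $\approx \log n$ scales. Summing in $L_p$ would give the Lipschitz upper bound $(\log n)^{1/p}$ (since a sum of $\log n$ terms each of comparable magnitude has $L_p$-norm roughly $(\log n)^{1/p}$ times their common magnitude), while the lower bound $d_{X_n}(x,y) \le \|f_{n,p}(x)-f_{n,p}(y)\|_p$ should come from isolating the single critical scale $r_{k^{\ast}} \approx d_{X_n}(x,y)$ at which $\varphi_{k^{\ast}}$ first separates $x$ from $y$. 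The doubling of $f_{n,p}(X_n)$ would then follow from observing that, at any scale $r$, only boundedly many of the $\varphi_{k}$ oscillate at that scale; locally the image lies close to a bounded-dimensional slice, which is $K$-doubling with $K$ independent of $n$.

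For the distortion lower bounds I would invoke Markov-type theory. Since $L_q$ has Markov type $\min(q,2)$ by Naor--Peres--Schramm--Sheffield, it would suffice to exhibit on $X_n$ a stationary reversible Markov chain $(Z_t)$ such that after $t\approx(\log n)^{O(1)}$ steps the displacement $\E[d_{X_n}(Z_t,Z_0)^q]^{1/q}$ substantially exceeds the Markov-type prediction $t^{\min(1/q,1/2)}\E[d_{X_n}(Z_1,Z_0)^q]^{1/q}$. Transporting such a chain through any bi-Lipschitz embedding of $X_n$ into $L_q$ with distortion $D$ then forces $D \ge c(q)(\log n)^{\min(1/2,1/q)}$. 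For $L_1$, where Markov type is trivial, I would instead appeal to Markov convexity or a Cheeger--Kleiner-type vertical-versus-horizontal Poincar\'e obstruction, which should yield distortion at least $(\log n)^{c}$ for some absolute $c\in(0,1/2]$.

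The main obstacle will be reconciling the doubling of $f_{n,p}(X_n)$ with the strength of the Markov-type lower bound. Doubling demands that the embedding be ``sparse'' scale-by-scale, so that each scale is witnessed by only boundedly many coordinates; the random-walk argument, however, needs enough scale-coherent structure in $X_n$ to generate a robust displacement deficit against any $L_q$ embedding. Engineering a single graph $X_n$ together with a harmonic system $\{\varphi_{k}\}$ so that both sides cooperate --- in particular so that the chain $(Z_t)$ exhibits an unbounded displacement gap even after passing through any $L_q$ embedding, while the $L_p$ image remains $K$-doubling with $K$ absolute --- is the heart of the construction, and I expect it to rest on a specifically engineered recursive family rather than any off-the-shelf example.
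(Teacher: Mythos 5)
Your proposal is a plan rather than a proof, and the two load-bearing mechanisms you propose are not the ones that make the argument work. The paper takes $X_n$ to be (essentially) a ball of radius $\asymp n^{1/4}$ in the discrete Heisenberg group $\H$, so your first guess for the underlying space is right and the ``specifically engineered recursive family'' you anticipate is unnecessary. But the embedding is not a finite wavelet sum over $\asymp\log n$ scales with orthogonal coordinates; it is a single explicit map into $L_p(\R^{2n+1})$, namely a multiple of $x\mapsto N(x^{-1}\cdot)^{-\alpha}-N(\cdot)^{-\alpha}$ with $\alpha=(2n+2)/p-1+\e$ and $\e=1/\log n$ (equivalently, in the second proof, an integral of the Schr\"odinger representations $\sigma_\lambda$ against the weight $\lambda^{-1/p-(1-\e)/2}$). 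The crucial feature is exact homogeneity under the Heisenberg dilations, $\|S(\d_\theta x)-S(\d_\theta y)\|_p=\theta^{1-\e}\|S(x)-S(y)\|_p$, from which doubling of the image follows because the push-forward of Haar measure is Ahlfors regular, $\mu(B_p(f,R))=(R/r)^{4/(1-\e)}\mu(B_p(f,r))$, plus a standard packing argument. Your proposed doubling mechanism --- that locally the image lies near a bounded-dimensional slice because only boundedly many scales oscillate --- cannot be right here: the whole point of the theorem is that the image is doubling yet admits no bi-Lipschitz embedding into any $\R^k$, so it must be genuinely infinite dimensional at every location and scale; a construction that is locally boundedly dimensional would risk destroying exactly the property you need to keep. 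Relatedly, the Lipschitz constant $(\log n)^{1/p}$ does not come from summing $\log n$ comparable terms in $L_p$; it is the $\e^{-1/p}$ blow-up of an integral at a critical exponent, with $\e=1/\log n$ chosen so that $d^{1-\e}\asymp d$ on the range of distances occurring in $X_n$.

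The distortion lower bounds are also not obtained by Markov type. For a Heisenberg ball the word-metric random walk is diffusive, so there is no displacement deficit for a Markov-type argument to detect; the exponent $\min\{1/2,1/q\}$ comes from quantitative vertical-versus-horizontal Poincar\'e inequalities (the inequality~\eqref{eq:LN quote} of Lafforgue--Naor for general $q$, Austin--Naor--Tessera for $q=2$, and Cheeger--Kleiner--Naor for $q=1$), which exploit the fact that the central element $c=[a,b]$ satisfies $d_W(c^k,e_\H)\asymp\sqrt{k}$ while any $L_q$-valued Lipschitz map must collapse the central direction. These results are cited, not reproved; your plan would need either to cite them as well or to supply a genuinely new obstruction, and Markov type of $L_q$ (which equals $\min(q,2)$) does not by itself produce the exponent $1/q$ for $q>2$ on this space. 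As written, neither the upper-bound construction, nor the doubling claim, nor the lower bound is actually established.
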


Here and in what follows, the notations $A\lesssim B$ and $B\gtrsim
A$ mean that $A\le CB$ for some universal constant $C\in
(0,\infty)$. If we need to allow $C$ to depend on parameters, we
indicate this by subscripts, thus e.g. $A \lesssim_{\beta} B$
means that $A \leq C(\beta) B$ for some $C(\beta)\in
(0,\infty)$ which is allowed to depend only on the parameter
$\beta$. The notation $A\asymp B$ stands for $(A\lesssim
B) \wedge  (B\lesssim A)$.

The fact that Theorem~\ref{thm:main 3} implies
Theorem~\ref{thm:main2} is simple. Indeed, fix $p\in (2,\infty)$. By
a standard ``disjoint union" argument (see e.g. the beginning of
Section 4 in~\cite{NN12}), there exists a doubling subset $D_p$ of
$L_p$ that contains an isometric copy of a rescaling of
$f_{n,p}(X_n)$ for every $n\in \N$. If $q\in (1,p)$ and
$f_{n,p}(X_n)$ embeds with bi-Lipschitz distortion $M\in [1,\infty)$
into $L_q$, then by Theorem~\ref{thm:main 3} we have
$$
M\gtrsim_q (\log n)^{\min\left\{\frac12,\frac1{q}\right\}-\frac{1}{p}}\xrightarrow[n\to \infty]{} \infty.
$$
Consequently, $D_p$ does not admit a bi-Lipschitz embedding into
$L_q$. For $q=1$ the same argument shows that $D_p$ does not admit a
bi-Lipschitz embedding into $L_1$ provided $p>1/c$, where $c$ is the
(universal) constant from Theorem~\ref{thm:main 3}.

\begin{remark}\label{rem:finite dim}
The above reasoning implies that for every $p\in (2,\infty)$ and
every $n\in \N$ there exists an $n$-point $O(1)$-doubling subset
$S_n$ of $L_p$ such that for every $k\in \N$, if $S_n$ embeds with
distortion $M\in [1,\infty)$ into $\ell_p^k$ then necessarily,
$$
M\gtrsim \left(\frac{\log n}{k}\right)^{\frac1{2}-\frac{1}{p}}.
$$
This is true because $\ell_p^k$ embeds into Hilbert space with distortion $k^{\frac{1}{2}-\frac{1}{p}}$. It is open whether or not a similar statement holds true for $p\in
(1,2]$. For $p=1$ an even stronger lower bound  was shown to hold
true in~\cite{LMN05}: for every $n\in \N$ there exists an $n$-point
$O(1)$-doubling subset $A_n$ of $L_1$ such that for every $k\in \N$,
if $A_n$ embeds with distortion $M\in [1,\infty)$ into $\ell_1^k$
then necessarily

\begin{equation}\label{eq:laakso}
M\gtrsim \sqrt{\frac{\log n}{\log k}}.
\end{equation}
The examples leading to~\eqref{eq:laakso} are the Laakso graphs~\cite{Laakso-gafa}, which are doubling metric spaces that were shown to embed into $L_1$ in~\cite{GNRS04}. They yield a doubling subset of $L_1$ that does not admit a bi-Lipschitz embedding into Hilbert space~\cite{Laakso-gafa} (see also~\cite[Thm.~2.3]{LP01}), any uniformly convex Banach space~\cite{MN08-socg}, or even any Banach space with the Radon--Nikod\'ym property~\cite{Ost11}.
\end{remark}

\subsection{The Heisenberg group} The metric spaces $\{X_n\}_{n=1}^\infty$ of
Theorem~\ref{thm:main 3} arise from the discrete Heisenberg group.
To explain this, recall that the discrete Heisenberg group, denoted
$\H$, is the group generated by two elements $a,b\in \H$, with the
relations asserting that the commutator $[a,b]=aba^{-1}b^{-1}$ is in
the center of $\H$. Let $e_\H$ denote the identity element of $\H$.
The left-invariant word metric on $\H$ induced by the symmetric
generating set $\{a,b,a^{-1},b^{-1}\}$ is denoted
$d_W(\cdot,\cdot)$. For $r\in [1,\infty)$ let $B(r)= \{x\in \H:\
d_W(x,e_\H)\le r\}$ denote the corresponding closed ball of radius
$r$. Then $|B(r)|\asymp r^4$ (see e.g.~\cite{Bla03}). It follows
that there exists $\eta_1,\eta_2\in (0,\infty)$ such that for every
large enough $n\in \N$ there exists $X_n\subset \H$ with
$|X_n|=n$ and
\begin{equation}\label{eq:Xn inclusions}
B\left(\eta_1\sqrt[4]{n}\right)\subset X_n\subset B\left(\eta_2\sqrt[4]{n}\right).
\end{equation}

By virtue of the leftmost inclusion in~\eqref{eq:Xn inclusions}, the
distortion lower bounds that are asserted in Theorem~\ref{thm:main
3} follow from~\cite{ckn} for $q=1$, from~\cite{ANT10} for $q=2$ and
from~\cite{LN12} for $q\in (0,\infty)\setminus\{2\}$. The remaining
assertions of Theorem~\ref{thm:main 3} follow from
Theorem~\ref{thm:main4} below.

\begin{theorem}\label{thm:main4}
For every $\e\in (0,1/2]$ and $p\in [2,\infty)$ there exists a
mapping $F_{\e,p}:\H\to L_p$ such that $F_{\e,p}(\H)\subseteq L_p$
is $2^{16}$-doubling and
$$
\forall\, x,y\in \H,\qquad d_W(x,y)^{1-\e}\le \|F_{\e,p}(x)-F_{\e,p}(y)\|_p\lesssim \frac{d_W(x,y)^{1-\e}}{\e^{1/p}}.
$$
\end{theorem}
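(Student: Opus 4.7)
The plan is to construct $F_{\varepsilon,p}$ via a multi-scale decomposition adapted to the left-invariant Heisenberg geometry. For each $k \in \Z$ let $\psi_k: \H \to [0,1]$ be a $2^{-k}$-Lipschitz ``tent'' function equal to $1$ on $B(2^k)$ and vanishing outside $B(2^{k+1})$, and set $\phi_k(x)(z) \eqdef \psi_k(x^{-1}z)$, an element of $L_p(\H, \mu)$ where $\mu$ is counting measure on $\H$. Using $|B(r)| \asymp r^4$ one checks that
\[
\|\phi_k(x) - \phi_k(y)\|_p^p \;\asymp\; 2^{4k}\min\bigl\{1, d_W(x,y)/2^k\bigr\}^p.
\]
I then define
\[
F_{\varepsilon,p}(x) \;\eqdef\; \Bigl(2^{k(1-\varepsilon) - 4k/p}\bigl(\phi_k(x) - \phi_k(e_{\H})\bigr)\Bigr)_{k \in \Z} \;\in\; \bigoplus_{k \in \Z} L_p(\H,\mu) \cong L_p,
\]
subtracting a basepoint to secure convergence of the infinite series.

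For the bi-Lipschitz bounds, expanding
\[
\|F_{\varepsilon,p}(x) - F_{\varepsilon,p}(y)\|_p^p \;\asymp\; \sum_{k \in \Z} 2^{kp(1-\varepsilon)}\min\bigl\{1, d_W(x,y)/2^k\bigr\}^p
\]
and splitting at the critical scale $2^{k_\star} \asymp d_W(x,y)$, the small-scale block (where the minimum is $1$) is a geometric sum $\asymp d_W(x,y)^{p(1-\varepsilon)}$, securing the lower bound $d_W(x,y)^{1-\varepsilon} \le \|F_{\varepsilon,p}(x) - F_{\varepsilon,p}(y)\|_p$. The large-scale block equals $d_W(x,y)^p \sum_{k > k_\star} 2^{-kp\varepsilon} \asymp d_W(x,y)^{p(1-\varepsilon)}/(p\varepsilon)$, whose $p$-th root gives the claimed upper bound $d_W(x,y)^{1-\varepsilon}/\varepsilon^{1/p}$.

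The main obstacle is verifying that $F_{\varepsilon,p}(\H)$ is $2^{16}$-doubling in $L_p$ with a constant independent of $\varepsilon$ and $p$. Naively pushing the snowflake-doubling of $(\H, d_W^{1-\varepsilon})$ through $F_{\varepsilon,p}$ loses a factor of the distortion $1/\varepsilon^{1/p}$ in the covering count, which blows up as $\varepsilon \to 0$. To do better, I would exploit the fact that a pair $x,y \in \H$ contributes nontrivially to an $L_p$-ball of radius $r$ only through scales $k$ in a bounded window around the critical $k_\star$ with $2^{k_\star(1-\varepsilon)}/\varepsilon^{1/p} \asymp r$: small scales ($k \ll k_\star$) saturate the tent's bounded range uniformly in $y$ and so can be absorbed into a single coarse representative, while large scales ($k \gg k_\star$) produce contributions that depend approximately linearly on the Heisenberg translate $x^{-1}y$ and hence can be coarsely quantized into $O(1)$ classes using the $O(1)$-doubling of $\H$ itself at scale $2^{k_\star}$. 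Turning this heuristic into a genuine $\varepsilon$-free $L_p$-covering bound is the delicate point; it requires careful control of the cross-scale interaction, since orthogonality is unavailable outside the Hilbertian case $p=2$. Here the $p$-triangle inequality combined with the Ahlfors regularity $|B(r)| \asymp r^4$ of $\H$ should suffice to absorb the cross terms into the absolute constant $2^{16}$, completing the proof.
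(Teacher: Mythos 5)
Your multi-scale construction handles the bi-Lipschitz estimates correctly: this part is a standard Assouad-type argument, and the computation extracting the $1/\e^{1/p}$ loss from the geometric tail $\sum_{k>k_\star}2^{-kp\e}$ is fine. But the proof has a genuine gap exactly where you flag ``the delicate point'': the uniform doubling of the image. This is not a technical verification that the $p$-triangle inequality and the Ahlfors regularity of $\H$ will ``absorb into the constant $2^{16}$'' --- it is the entire content of the theorem. As you note, pushing the doubling of the snowflake $(\H,d_W^{1-\e})$ through a map of distortion $\e^{-1/p}$ loses a factor that blows up as $\e\to 0$: the preimage of an $L_p$-ball of radius $r$ under your $F_{\e,p}$ is only sandwiched between Heisenberg balls whose radii differ by the unbounded factor $\e^{-1/(p(1-\e))}$, so a covering bound cannot be read off from the two-sided estimates. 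Your proposed remedy (quantizing the large-scale contributions into $O(1)$ classes and absorbing cross-scale interactions) is a heuristic with no argument behind it, and there is no structural reason it should work: the dyadic scales $2^k$ and the tent functions $\psi_k$ break the dilation symmetry of $\H$, so nothing forces the image metric to be homogeneous. Producing doubling \emph{images} (rather than merely embedding doubling spaces with controlled distortion) is precisely the difficulty emphasized in the introduction around the Lang--Plaut problem, and it is the step your proposal leaves unproved. The specific constant $2^{16}$ is likewise unjustified in your scheme.

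The paper circumvents this issue by building a map with \emph{exact} equivariance rather than an approximate multi-scale one. It works on the continuous Heisenberg group and sets $T(x)(z)=N(x^{-1}z)^{-\alpha}-N(z)^{-\alpha}$ with $\alpha=\frac{2n+2}{p}-1+\e$, a Riesz-kernel-type embedding into $L_p(\R^{2n+1})$. Left-invariance of Haar measure gives $\|S(x)-S(y)\|_p=p(1-\e)^{1/p}\|T(y^{-1}x)\|_p$, and the homogeneity $N(\d_\theta u)=\theta N(u)$ yields the \emph{identity} $\|S(\d_\theta x)-S(\d_\theta y)\|_p=\theta^{1-\e}\|S(x)-S(y)\|_p$. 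Consequently the push-forward of Lebesgue measure on $\R^3$ under $S\circ\f$ satisfies the exact volume relation $\mu(B_p(f,2r))=2^{4/(1-\e)}\mu(B_p(f,r))$, and the standard packing argument gives the doubling constant $2^{8/(1-\e)}\le 2^{16}$ for $\e\le 1/2$. If you want to salvage your approach, you would need to replace the discrete sum over dyadic scales by a construction carrying this continuous dilation-invariance; the paper's second proof, which integrates the Schr\"odinger representations over all frequencies $\lambda\in(0,\infty)$ against the weight $\lambda^{-\frac1p-\frac{1-\e}{2}}$, is exactly the equivariant ``continuous multi-scale'' version of your idea.
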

The case $p=2$ of Theorem~\ref{thm:main4} was previously proven
in~\cite{LN06} relying on Hilbertian arguments, namely on
Schoenberg's characterization~\cite{Sch38} of subsets of
Hilbert space through positive definite kernels. Here we find
a different approach that works also when $p\in (2,\infty)$. Note that \cite{LN06} contains a stronger statement that is used crucially in the context of~\cite{LN06} and does not follow from our proof.

Theorem~\ref{thm:main4} implies Theorem~\ref{thm:main 3} because in
light of the rightmost inclusion in~\eqref{eq:Xn inclusions}, all
the nonzero distances in $X_n$ are between $1$ and
$2\eta_2\sqrt[4]{n}$. Consequently, for $\e_n=1/\log n$ we have
$d_W(x,y)^{1-\e_n}\asymp d_W(x,y)$ for every $x,y\in X_n$. We can
therefore take $f_{n,p}=F_{\e_n,p}$ in Theorem~\ref{thm:main 3}.

\begin{remark}
The dependence on $\e$ in Theorem~\ref{thm:main4} is asymptotically
sharp as $\e\to 0$. For $p=2$ this was proven in~\cite{NN12} using
an inequality of~\cite{ANT10}. An analogous argument works for $p\in
(2,\infty)$ using~\cite{LN12} instead of~\cite{ANT10}. Indeed, write
$c=[a,b]$ (recall that $a,b$ are the generators of $\H$).
By~\cite{LN12} every $f:\H\to L_p$ satisfies

\begin{multline}\label{eq:LN quote}
\sum_{k=1}^{n^2}\sum_{x\in B_n}\frac{
\|f(xc^k)-f(x)\|_p^p}{k^{1+p/2}}\\\lesssim_p \sum_{x\in B_{21n}}
\Big(\|f(xa)-f(x)\|^p_p+\|f(xb)-f(x)\|^p_p\Big).
\end{multline}
Suppose that $M\in [1,\infty)$ satisfies
\begin{equation}\label{eq:M contrapositive}
\forall\, x,y\in \H,\qquad d_W(x,y)^{1-\e}\le \|f(x)-f(y)\|_p\le Md_W(x,y)^{1-\e}.
\end{equation}
Since $d_W(c^k,e_\H)\asymp \sqrt{k}$ and $|B(m)|\asymp m^4$ for every $k,m\in \N$ (see
e.g.~\cite{Bla03}), by substituting~\eqref{eq:M contrapositive}
into~\eqref{eq:LN quote} we see that
$$
\sum_{k=1}^{n^2}n^4\cdot \frac{k^{(1-\e)p/2}}{k^{1+p/2}}\lesssim_p M^pn^4.
$$
Hence $M^p\gtrsim_p \sum_{k=1}^\infty\frac{1}{k^{1+\e p/2}}\gtrsim_p
\frac{1}{\e}$, so $M\gtrsim_p 1/\e^{1/p}$.
\end{remark}

\section{Proof of Theorem~\ref{thm:main4}}

For every $n\in \N$ and $x\in \R^{2n+1}$ let $\pi(x)\in \R^{2n}$ denote the canonical projection of $x$ to $\R^{2n}$, i.e,
$$
\pi(x_1,x_2,\ldots,x_{2n},x_{2n+1})\eqdef (x_1,\ldots,x_{2n}).
$$
For $x,y\in \R^{2n}$ write
$$
\langle x,y\rangle\eqdef \sum_{j=1}^{2n}x_jy_j\qquad\mathrm{and} \qquad [x,y]\eqdef \sum_{j=1}^n (x_{2j-1}y_{2j}-x_{2j}y_{2j-1}).
$$
We also write as usual $\|x\|_2\eqdef \sqrt{\langle x,x\rangle}$.

The Heisenberg group product on $\R^{2n+1}$ is defined as follows. For every $x,y\in \R^{2n+1}$ let $xy\in \R^{2n+1}\cong \R^{2n}\times \R$  be defined as
\begin{equation*}\label{eq:def Heisenberg product}
xy\eqdef \left(\pi(x)+\pi(y),x_{2n+1}+y_{2n+1}-2[\pi(x),\pi(y)]\right).
\end{equation*}
Under this product $\R^{2n+1}$ becomes a noncommutative group,
called the $n$th (continuous) Heisenberg group and denoted $\H_n$,
whose identity element is $0\in \R^{2n+1}$ and the multiplicative
inverse of $x\in \R^{2n+1}$ is given by $x^{-1}=-x$; see
e.g.~\cite{Sem03}. The Lebesgue measure on $\R^{2n+1}$ is a Haar
measure for $\H_n$.

The Kor\'anyi norm of $x\in \R^{2n+1}$ is defined by
$$
N(x)\eqdef \left(\|\pi(x)\|_2^4+x_{2n+1}^2\right)^{1/4}.
$$
As shown in e.g.~\cite{Cyg81}, we have $N(xy)\le N(x)+N(y)$ for every $x,y\in \R^{2n+1}$. Consequently, if we set
$$
\forall\, x,y\in \R^{2n+1},\qquad d_N(x,y)\eqdef N(x^{-1}y),
$$
then $d_N$ is a left-invariant metric on $\H_n$. For every $\theta\in \R$ define $\d_\theta:\R^{2n+1}\to \R^{2n+1}$ by
\begin{equation}\label{eq:def delta theta}
\forall\, x\in \R^{2n+1},\qquad \d_\theta(x)\eqdef \left(\theta \pi(x),\theta^2x_{2n+1}\right).
\end{equation}
Then $d_N(\d_\theta(x),\d_\theta(y))=|\theta|d_N(x,y)$ for every $x,y\in \R^{2n+1}$.

Fix $p\in [1,\infty)$ and $\e\in (0,1)$. Choose an integer $n\in \N$
such that
\begin{equation}\label{eq:p range}
n\le p<n+1,
\end{equation}
and define
\begin{equation}\label{eq:def alpha}
\alpha\eqdef \frac{2n+2}{p}-1+\e.
\end{equation}
Note that by the choice of $n$ we have $\alpha\in
[1+\e,3+\e)\subseteq (1,4)$. For every $x,z\in \R^{2n+1}\setminus
\{0\}$ define $T(x)(y)\in \R$ by
\begin{equation}\label{eq:def T}
T(x)(z)\eqdef \frac{1}{N(x^{-1}z)^{\alpha}}-\frac{1}{N(z)^{\alpha}}.
\end{equation}

\begin{lemma}\label{lem:int on ball}
For every $R\in (0,\infty)$ we have
$$
\left(\int_{B_N(0,R)} \frac{dz}{N(z)^{\alpha p}}\right)^{1/p}\asymp \frac{R^{1-\e}}{p(1-\e)^{1/p}}.
$$
\end{lemma}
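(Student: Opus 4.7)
The Kor\'anyi norm is homogeneous of degree one under the dilations $\d_R$ from~\eqref{eq:def delta theta}, which scale Lebesgue measure on $\R^{2n+1}$ by $R^Q$ with $Q=2n+2$. The change of variables $z=\d_R(w)$, together with the identity $Q-\alpha p=p(1-\e)$ which is immediate from~\eqref{eq:def alpha}, reduces the statement to the case $R=1$:
$$
\int_{B_N(0,R)} \frac{dz}{N(z)^{\alpha p}}=R^{p(1-\e)}\int_{B_N(0,1)} \frac{dw}{N(w)^{\alpha p}}.
$$
For the remaining integral I would invoke the polar-type formula $\int g(N(w))\,dw=Q|B_N(0,1)|\int_0^\infty g(r)r^{Q-1}\,dr$, which is a layer-cake consequence of $|B_N(0,r)|=r^Q|B_N(0,1)|$. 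Applying it with $g(r)=r^{-\alpha p}\1_{[0,1]}(r)$ and using $Q-1-\alpha p=p(1-\e)-1$ gives the explicit value $Q|B_N(0,1)|/(p(1-\e))$ for the integral over $B_N(0,1)$. Taking the $p$-th root of the resulting expression, and using $p^{1/p}\asymp 1$ together with $Q=2n+2\asymp p$, reduces the lemma to showing $|B_N(0,1)|^{1/p}\asymp 1/p$.

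For this volume estimate I would integrate out the central variable $x_{2n+1}\in\R$ in the definition of $N$ to obtain $|B_N(0,1)|=2\int_{\|u\|_2\le 1}\sqrt{1-\|u\|_2^4}\,du$, and then sandwich the integrand between $1$ (trivially) and $1/\sqrt{2}$ on the sub-ball $\|u\|_2\le 2^{-1/4}$. This yields $2^{(1-n)/2}\omega_{2n}\le|B_N(0,1)|\le 2\omega_{2n}$ with $\omega_{2n}=\pi^n/n!$, and since $n\le p$ the crude lower factor $2^{(1-n)/2}$ becomes harmless after raising to the $1/p$ power. Consequently $|B_N(0,1)|^{1/p}\asymp(\pi^n/n!)^{1/p}$.

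The main obstacle, and the place where the nontrivial $1/p$ factor in the statement is produced, is the final asymptotic identity $(\pi^n/n!)^{1/p}\asymp 1/p$. I would derive it from Stirling's formula $\log(n!)=n\log n-n+O(\log n)$: dividing by $p$ and using $n\le p<n+1$ (so that $n/p\in[1-1/(n+1),1]$ and $(\log n)/p=O(1)$ uniformly for $p\ge 1$), exponentiation gives $(n!)^{1/p}\asymp n/e\asymp p$, while $\pi^{n/p}\asymp 1$ is immediate. None of the earlier steps involve any genuine difficulty; the entire content of the lemma is this Stirling-type accounting.
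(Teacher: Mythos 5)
Your proof is correct, but it is organized differently from the paper's. The paper sandwiches $B_N(0,R)$ between two ``boxes'' $\{\|\pi(z)\|_2\le cR,\ |z_{2n+1}|\le c'R^2\}$ and estimates the integral directly in cylindrical coordinates on $\R^{2n}\times\R$, proving the upper and lower bounds separately; you instead first use the dilations $\d_R$ and the identity $Q-\alpha p=p(1-\e)$ (with $Q=2n+2$) to reduce to $R=1$, and then the polar-type formula for homogeneous norms to get the \emph{exact} value $Q|B_N(0,1)|/(p(1-\e))$, so that all the estimation is concentrated in the single volume bound $|B_N(0,1)|^{1/p}\asymp 1/p$, which you obtain by sandwiching the cross-sections of the Kor\'anyi ball. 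Both routes ultimately rest on the same Stirling-type fact $(\pi^n/n!)^{1/p}\asymp 1/p$ under the normalization $n\le p<n+1$; the paper invokes it without proof (``$v_{2n}^{1/p}\asymp 1/p$''), whereas you supply the derivation. What your version buys is a cleaner separation of the homogeneity (which gives the $R^{1-\e}$ and $(1-\e)^{-1/p}$ factors exactly) from the dimensional accounting (which gives the $1/p$); what the paper's version buys is that the same box-inclusion technique is reused verbatim in Lemma~\ref{lem:p integrability} for the integral over the complement of a ball, where your exact polar formula would work equally well but is not needed. All the individual steps you use --- the Jacobian $R^{2n+2}$ of $\d_R$, the layer-cake identity from $|B_N(0,r)|=r^Q|B_N(0,1)|$, the convergence condition $Q-\alpha p=p(1-\e)>0$, and the uniformity of the Stirling estimate over $n\le p<n+1$ --- check out.
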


\begin{proof} This is a straightforward computation. First, since
\begin{equation}\label{eq:box upper inclusion}
B_N(0,R)\subset\left\{z\in \R^{2n+1}:\ \|\pi(z)\|_2\le R\ \wedge \ |z_{2n+1}|\le R^2\right\},
\end{equation}
 by integration in polar coordinates on $\R^{2n}$ we have,
\begin{align}
\nonumber&\left(\int_{B_N(0,R)} \frac{dz}{N(z)^{\alpha p}}\right)^{1/p}\\&
\le
\left(\int_{0}^R 2nv_{2n}r^{2n-1} \left(\int_{-R^2}^{R^2}
\frac{dt}{\left(r^4+t^2\right)^{\alpha p/4}}\right)dr\right)^{1/p} \label{eq:vn}\\
&\lesssim \frac{1}{p}\left(\int_0^R \frac{dr}{r^{\alpha
p-2n-1}}\right)^{1/p}\asymp \frac{R^{1-\e}}{p(1-\e)^{1/p}}, \label{eq:upper on ball}
\end{align}
where in~\eqref{eq:vn} $v_{2n}=\pi^n/n!$ denotes the volume
of the Euclidean unit ball in $\R^{2n}$. The penultimate inequality
of~\eqref{eq:upper on ball} uses the fact that $v_{2n}^{1/p}\asymp 1/p$ (recall~\eqref{eq:p range}). In
the final step of~\eqref{eq:upper on ball} we used~\eqref{eq:def
alpha}. Using the inclusion
\begin{equation}\label{eq:box lower inclusion}
B_N(0,R)\supset\left\{z\in \R^{2n+1}:\ \|\pi(z)\|_2\le \frac{R}{\sqrt[4]{2}}\ \wedge \ |z_{2n+1}|\le \frac{R^2}{\sqrt{2}}\right\}
\end{equation}
in place of~\eqref{eq:box upper inclusion}, the reverse inequality
is proved analogously.
\end{proof}

\begin{corollary}\label{cor:on ball}
For every $x\in \R^{2n+1}$ and $K\in [1/3,\infty)$ we have
\begin{equation}\label{eq:cor on balls}
\frac{N(x)^{1-\e}}{p(1-\e)^{1/p}}\lesssim  \left(\int_{B_N(0,KN(x))} |T(x)(z)|^pdz\right)^{1/p}
\lesssim \frac{K^{1-\e}N(x)^{1-\e}}{p(1-\e)^{1/p}}.
\end{equation}
\end{corollary}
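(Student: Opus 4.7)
The plan is to derive both inequalities directly from Lemma~\ref{lem:int on ball}, using the triangle inequality in $L^p$ for the upper bound and a pointwise lower bound on $|T(x)(z)|$ near $z=0$ for the lower bound. The crucial structural fact I will exploit is that Lebesgue measure on $\R^{2n+1}$ is left-invariant under the Heisenberg product, so translations of the form $z\mapsto xz$ preserve integrals.

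\textbf{Upper bound.} By the triangle inequality in $L^p(B_N(0,KN(x)))$,
\begin{equation*}
\left(\int_{B_N(0,KN(x))}|T(x)(z)|^p\,dz\right)^{1/p}\le I_1+I_2,
\end{equation*}
where
\begin{equation*}
I_1\eqdef\left(\int_{B_N(0,KN(x))}\frac{dz}{N(x^{-1}z)^{\alpha p}}\right)^{1/p},\quad I_2\eqdef\left(\int_{B_N(0,KN(x))}\frac{dz}{N(z)^{\alpha p}}\right)^{1/p}.
\end{equation*}
Lemma~\ref{lem:int on ball} bounds $I_2\lesssim (KN(x))^{1-\e}/(p(1-\e)^{1/p})$ directly. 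For $I_1$, I change variables $w=x^{-1}z$ (which preserves Lebesgue measure); the new domain is $x^{-1}B_N(0,KN(x))=\{w:N(xw)\le KN(x)\}$, and by the triangle inequality for $d_N$ this is contained in $B_N(0,(1+K)N(x))$. Applying Lemma~\ref{lem:int on ball} again gives $I_1\lesssim ((1+K)N(x))^{1-\e}/(p(1-\e)^{1/p})$. Since $K\ge 1/3$ implies $1+K\le 4K$, both terms are $\lesssim K^{1-\e}N(x)^{1-\e}/(p(1-\e)^{1/p})$, as required.

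\textbf{Lower bound.} Here I restrict the integral to the smaller ball $B_N(0,N(x)/3)$, which is contained in $B_N(0,KN(x))$ for every $K\ge 1/3$. For $z\in B_N(0,N(x)/3)$, the reverse triangle inequality gives
\begin{equation*}
N(x^{-1}z)\ge N(x)-N(z)\ge \tfrac{2}{3}N(x)\ge 2N(z),
\end{equation*}
so, using that $\alpha\ge 1+\e\ge 1$,
\begin{equation*}
\frac{1}{N(x^{-1}z)^\alpha}\le \frac{1}{(2N(z))^\alpha}\le \frac{1}{2N(z)^\alpha}.
\end{equation*}
Consequently $|T(x)(z)|\ge \tfrac{1}{2N(z)^\alpha}$ pointwise on $B_N(0,N(x)/3)$, and Lemma~\ref{lem:int on ball} (applied with radius $N(x)/3$) yields
\begin{equation*}
\left(\int_{B_N(0,KN(x))}|T(x)(z)|^p\,dz\right)^{1/p}\ge \frac{1}{2}\left(\int_{B_N(0,N(x)/3)}\frac{dz}{N(z)^{\alpha p}}\right)^{1/p}\gtrsim \frac{N(x)^{1-\e}}{p(1-\e)^{1/p}}.
\end{equation*}

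\textbf{Main obstacle.} Essentially there is none---both inequalities are consequences of Lemma~\ref{lem:int on ball} once one notices the left-invariance trick for $I_1$ (so that the singularity at $z=x$ can be transported to the singularity at $0$) and the fact that, on the inner ball of radius $N(x)/3$, the singularity of $1/N(z)^\alpha$ at the origin dominates the bounded contribution of $1/N(x^{-1}z)^\alpha$ by at least a factor of two (this is where the hypothesis $K\ge 1/3$ enters naturally). The only thing to verify carefully is that the factor $\alpha\ge 1$ ensures the geometric ratio $(1/2)^\alpha\le 1/2$.
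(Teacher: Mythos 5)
Your proof is correct and follows essentially the same route as the paper's: the triangle inequality in $L^p$ plus the Haar-measure change of variable $w=x^{-1}z$ (with $B_N(x^{-1},KN(x))\subset B_N(0,(1+K)N(x))\subset B_N(0,4KN(x))$) for the upper bound, and the pointwise estimate $|T(x)(z)|\gtrsim N(z)^{-\alpha}$ on $B_N(0,N(x)/3)$ for the lower bound, both reduced to Lemma~\ref{lem:int on ball}. No gaps.
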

\begin{proof}
For the upper bound note that by the definition of $T$
in~\eqref{eq:def T},
\begin{align*}
&\left(\int_{B_N(0,KN(x))} |T(x)(z)|^pdz\right)^{1/p}\\&\le \left(\int_{B_N(0,KN(x))}
\frac{dz}{N(z)^{\alpha p}}\right)^{1/p}+
\left(\int_{B_N(0,KN(x))} \frac{dz}{N(x^{-1}z)^{\alpha p}}\right)^{1/p}\\
&= \left(\int_{B_N(0,KN(x))}
\frac{dz}{N(z)^{\alpha p}}\right)^{1/p}+
\left(\int_{B_N\left(x^{-1},KN(x)\right)} \frac{dw}{N(w)^{\alpha p}}\right)^{1/p},
\end{align*}
where we used the fact that the Lebesgue measure on $\R^{2n+1}$ is a
Haar measure of the Heisenberg group, and the left-invariance of the
metric $d_N$. By the triangle inequality, $B_N(x^{-1},KN(x))\subset
B_N(0,4KN(x))$, so the rightmost inequality in~\eqref{eq:cor on
balls} follows from Lemma~\ref{lem:int on ball}.

If $z\in B_N(0,N(x)/3)$ then $N(x^{-1}z)\ge N(x)-N(z)\ge 2N(z)$,
whence $N(z)^{-\alpha}\ge 2N(x^{-1}z)^{-\alpha}$ (using $\alpha\ge
1$). So $|T(x)(z)|\ge N(z)^{-\alpha}$ for every $z\in
B_N(0,N(x)/3)$. Since $K\ge 1/3$, the leftmost inequality
in~\eqref{eq:cor on balls} now follows from Lemma~\ref{lem:int on
ball}.
\end{proof}

\begin{lemma}\label{lem:p integrability}
For every $x\in \R^{2n+1}$ we have $T(x)\in L_p(\R^{2n+1})$ and
\begin{equation}\label{eq:desired T upper}
\|T(x)\|_{L_p(\R^{2n+1})}\lesssim \frac{N(x)^{1-\e}}{p}
\left(\frac{1}{\e^{1/p}}+\frac{1}{(1-\e)^{1/p}}\right).
\end{equation}
\end{lemma}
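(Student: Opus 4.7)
The plan is to split the $L_p$ integral defining $\|T(x)\|_{L_p(\R^{2n+1})}$ at the Kor\'anyi threshold $N(z)=2N(x)$ and treat the two pieces separately. For the near piece $\{N(z)\leq 2N(x)\}$ I would simply invoke Corollary~\ref{cor:on ball} with $K=2$, which yields a contribution of order $N(x)^{1-\e}/\bigl(p(1-\e)^{1/p}\bigr)$ and thereby produces the second term in~\eqref{eq:desired T upper}. So the real work lies in the tail $\int_{N(z)\geq 2N(x)}|T(x)(z)|^p\,dz$, where one must exploit the cancellation in $T(x)(z)=N(x^{-1}z)^{-\alpha}-N(z)^{-\alpha}$ rather than bound the two summands separately.

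For the tail, I would first establish a pointwise decay estimate. On $\{N(z)\geq 2N(x)\}$ the triangle inequality for $N$ (together with $N(x^{-1})=N(-x)=N(x)$) gives $\tfrac12 N(z)\leq N(x^{-1}z)\leq\tfrac32 N(z)$ as well as $|N(x^{-1}z)-N(z)|\leq N(x)$. Applying the mean value theorem to the decreasing function $s\mapsto s^{-\alpha}$ on the interval joining $N(z)$ and $N(x^{-1}z)$, and bounding its derivative by $\alpha\min(N(z),N(x^{-1}z))^{-\alpha-1}\lesssim \alpha\,N(z)^{-\alpha-1}$, yields
$$
|T(x)(z)| \;\lesssim\; \frac{\alpha\,N(x)}{N(z)^{\alpha+1}}.
$$
Since $\alpha<4$ by~\eqref{eq:def alpha}, it then suffices to control the purely geometric quantity $\int_{N(z)\geq 2N(x)}N(z)^{-p(\alpha+1)}\,dz$.

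This last integral is the natural analogue of the one in Lemma~\ref{lem:int on ball} with the integration domain replaced by the complement of a Kor\'anyi ball, and I would handle it by exactly the polar-coordinate computation used there, with one key change in exponents. By~\eqref{eq:def alpha} the new exponent is $p(\alpha+1)=(2n+2)+p\e$, so after the substitution $t=r^2u$ the inner $u$-integral still converges to an absolute constant, while the outer $r$-integral now becomes $\int_{2N(x)}^{\infty}r^{-1-p\e}\,dr=(p\e)^{-1}(2N(x))^{-p\e}$; this replaces the factor $(p(1-\e))^{-1}$ of Lemma~\ref{lem:int on ball} by $(p\e)^{-1}$. The boundary region $r\leq 2N(x)$, where the $t$-constraint is nontrivial, contributes only a lower-order term, since there $(r^4+t^2)^{p(\alpha+1)/4}\geq (2N(x))^{p(\alpha+1)}$ when $|t|\leq(2N(x))^2$, and a standard tail bound handles $|t|\geq(2N(x))^2$. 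Using $v_{2n}^{1/p}\asymp 1/p$ to take $p$-th roots and multiplying by the pointwise factor $\alpha N(x)\lesssim N(x)$ then shows that the tail is bounded by $N(x)^{1-\e}/(p\,\e^{1/p})$, which is the first term in~\eqref{eq:desired T upper}. The only real obstacle is the bookkeeping: one must verify that $r$-integrability on the tail is governed precisely by $\e>0$, mirroring the near-bound restriction $\e<1$, so that the two contributions exhibit the symmetric behavior in $\e$ and $1-\e$ asserted by the lemma.
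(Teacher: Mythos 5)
Your proposal is correct and follows essentially the same route as the paper: split at $N(z)=2N(x)$, handle the near piece via Corollary~\ref{cor:on ball}, and on the tail use the triangle inequality plus the mean value theorem to get $|T(x)(z)|\lesssim N(x)/N(z)^{\alpha+1}$, reducing matters to $\int_{N(z)\ge 2N(x)}N(z)^{-(\alpha+1)p}\,dz\asymp (p\e)^{-1}N(x)^{-p\e}\cdot v_{2n}$. The only (cosmetic) difference is that you evaluate this last integral via the substitution $t=r^2u$ with a separate boundary region, whereas the paper splits the exterior of the ball into the regions $\|\pi(z)\|_2\ge\sqrt{|z_{2n+1}|}$ and $\|\pi(z)\|_2\le\sqrt{|z_{2n+1}|}$; both computations are valid and give the stated bound.
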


\begin{proof} If $z\in \R^{2n+1}\setminus B_N(0,2N(x))$ then by the triangle inequality for
the Kor\'anyi norm we have $N(x^{-1}z)\le N(x)+N(z)\le 2N(z)$ and
$N(x^{-1}z)\ge N(z)-N(x)\ge N(z)/2$. Consequently  $N(x^{-1}z)\asymp
N(z)$ for every $z\in \R^{2n+1}\setminus B_N(0,2N(x))$, and it
therefore follows that
$$
 |T(x)(z)|\lesssim \frac{|N(x^{-1}z)-N(z)|}{N(z)^{\alpha+1}}\le \frac{N(x)}{N(z)^{\alpha+1}}.
$$
We conclude that
\begin{align}\label{eq:UV}
\nonumber&\left(\int_{\R^{2n+1}\setminus B_N(0,2N(x))}|T(x)(z)|^pdz\right)^{1/p}\\&\le
N(x)\left(\int_{\R^{2n+1}\setminus B_N(0,2N(x))}\frac{dz}{N(z)^{(\alpha+1)p}}\right)^{1/p}\nonumber\\
&\le N(x) \left(\int_{U_x}\frac{dz}{N(z)^{(\alpha+1)p}}\right)^{1/p}
+ N(x) \left(\int_{V_x}\frac{dz}{N(z)^{(\alpha+1)p}}\right)^{1/p},
\end{align}
where
$$
U_x\eqdef \left\{z\in \R^{2n+1}\setminus B_N\left(0,2N(x)\right):\ \|\pi(z)\|_2\ge \sqrt{|z_{2n+1}|}\right\}
$$
and
$$
V_x \eqdef \left\{z\in \R^{2n+1}\setminus B_N\left(0,2N(x)\right):\ \|\pi(z)\|_2\le \sqrt{|z_{2n+1}|}\right\}.
$$
For $z\in U_x$ we have $\|\pi(z)\|_2\le N(z)\le 2\|\pi(z)\|_2$, and
therefore
\begin{multline}\label{eq:U}
\left(\int_{U_x}\frac{dz}{N(z)^{(\alpha+1)p}}\right)^{1/p}\le
\left(\int_{\{w\in \R^{2n}:\ \|w\|_2\ge N(x)\}} \frac{2dw}
{\|w\|_2^{(\alpha+1)p-2}}\right)^{1/p}\\
= \left(\int_{N(x)}^\infty \frac{4nv_{2n} dr} {r^{(\alpha+1)p-2n-1}}
\right)^{1/p}\lesssim
\frac{1}{p}\left(\int_{N(x)}^\infty\frac{dr}{r^{1+p\e}}\right)^{1/p}\lesssim
\frac{1}{p\e^{1/p}N(x)^\e}.
\end{multline}
If $z\in V_x$ then $\sqrt{|z_{2n+1}|}\le N(z)\le
2\sqrt{|z_{2n+1}|}$, and therefore
\begin{align}\label{eq:V}
\nonumber&\left(\int_{V_x}\frac{dz}{N(z)^{(\alpha+1)p}}\right)^{1/p}\\&\le
\nonumber\left(\int_{N(x)^2}^\infty \mathrm{vol}_{2n}\left(\left\{w\in \R^{2n}:\
\|w\|_2\le \sqrt{t}\right\}\right)\cdot\frac{2dt}
{t^{\frac{(\alpha+1)p}{2}}}\right)^{1/p}\\
&= \left(\int_{N(x)^2}^\infty \frac{2v_{2n}dt}{t^{\frac{(\alpha+1)p}{2}-n}}\right)^{1/p}\lesssim
\frac{1}{p\e^{1/p}N(x)^\e}.
\end{align}
The desired estimate~\eqref{eq:desired T upper} now follows from
substituting~\eqref{eq:U} and~\eqref{eq:V} into~\eqref{eq:UV}, and
using Corollary~\ref{cor:on ball}.
\end{proof}

\begin{corollary}\label{cor:bi lip done}
Define $S:\R^{2n+1}\to L_p(\R^{2n+1})$ by
\begin{equation}\label{eq:defS}
S\eqdef p(1-\e)^{1/p}T.
\end{equation}
Then for every $x,y\in \R^{2n+1}$ we have
$$
d_N(x,y)^{1-\e}\lesssim
\|S(x)-S(y)\|_{L_p(\R^{2n+1})}\lesssim
\left(1+\frac{(1-\e)^{1/p}}{\e^{1/p}}\right)d_N(x,y)^{1-\e}.
$$
\end{corollary}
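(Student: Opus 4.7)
The plan is to reduce the two-variable estimate on $\|S(x)-S(y)\|_p$ to the one-variable estimates on $\|T(\cdot)\|_p$ that have already been established in Lemma~\ref{lem:p integrability} and Corollary~\ref{cor:on ball}. The key observation is that $T$ behaves like a ``cocycle'' with respect to the left-regular action: for every $x,y,z\in \R^{2n+1}$,
$$
T(x)(z)-T(y)(z)=\frac{1}{N(x^{-1}z)^{\alpha}}-\frac{1}{N(y^{-1}z)^{\alpha}},
$$
so substituting $z=yw$ gives
$$
T(x)(yw)-T(y)(yw)=\frac{1}{N((y^{-1}x)^{-1}w)^{\alpha}}-\frac{1}{N(w)^{\alpha}}=T(y^{-1}x)(w).
$$
Since the Lebesgue measure is left-invariant under the Heisenberg multiplication, a change of variables $z=yw$ therefore yields
$$
\|T(x)-T(y)\|_{L_p(\R^{2n+1})}=\|T(y^{-1}x)\|_{L_p(\R^{2n+1})}.
$$

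Once this identity is in hand, the rest is routine. For the upper bound I would invoke Lemma~\ref{lem:p integrability} with $y^{-1}x$ in place of $x$, obtaining
$$
\|T(x)-T(y)\|_{L_p}\lesssim \frac{N(y^{-1}x)^{1-\e}}{p}\left(\frac{1}{\e^{1/p}}+\frac{1}{(1-\e)^{1/p}}\right),
$$
and then multiply by $p(1-\e)^{1/p}$ per the definition~\eqref{eq:defS} of $S$ and recall that $d_N(x,y)=N(y^{-1}x)$. For the lower bound I would apply the leftmost inequality in Corollary~\ref{cor:on ball} (say with $K=1/3$), which bounds $\|T(y^{-1}x)\|_{L_p}$ from below by a constant multiple of $N(y^{-1}x)^{1-\e}/(p(1-\e)^{1/p})$ merely by integrating over the smaller ball $B_N(0,N(y^{-1}x)/3)$; multiplying again by $p(1-\e)^{1/p}$ yields the stated lower bound $d_N(x,y)^{1-\e}$.

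There is no real obstacle here beyond spotting the left-invariance reduction; everything else is a bookkeeping step, since Lemma~\ref{lem:p integrability} and Corollary~\ref{cor:on ball} have already absorbed the analytic work of controlling the Kor\'anyi kernel inside and outside a ball of radius comparable to $N(x)$. The only minor point to verify is that $\alpha\ge 1$ (needed in the proof of Corollary~\ref{cor:on ball}), which holds thanks to~\eqref{eq:def alpha} together with the choice~\eqref{eq:p range} of $n$.
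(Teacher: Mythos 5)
Your proposal is correct and matches the paper's own proof: the paper likewise observes that left-invariance of Lebesgue measure gives $\|S(x)-S(y)\|_{L_p(\R^{2n+1})}=p(1-\e)^{1/p}\|T(y^{-1}x)\|_{L_p(\R^{2n+1})}$, then quotes Lemma~\ref{lem:p integrability} for the upper bound and the leftmost inequality of Corollary~\ref{cor:on ball} for the lower bound. You simply spell out the change of variables $z=yw$ more explicitly than the paper does.
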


\begin{proof}
Observe that since the Lebesgue measure is a Haar measure for the
Heisenberg group,
$$\|S(x)-S(y)\|_{L_p(\R^{2n+1})}=p(1-\e)^{1/p}\|T(y^{-1}x)\|_{L_p(\R^{2n+1})}.$$
Hence the desired upper bound on $\|S(x)-S(y)\|_{L_p(\R^{2n+1})}$
follows from Lemma~\ref{lem:p integrability} and the desired lower
bound on $\|S(x)-S(y)\|_{L_p(\R^{2n+1})}$ follows from the leftmost
inequality in Corollary~\ref{cor:on ball}.
\end{proof}

\begin{lemma}\label{lem:image doubling}
Let $S$ be defined as in~\eqref{eq:defS} and let $\f:\R^3\to
\R^{2n+1}$ be the canonical embedding of the corresponding
Heisenberg groups, i.e.,
$$
\forall(a,b,c)\in \R^3,\qquad \f(a,b,c)\eqdef(a,b,\underbrace{0,\ldots,0}_{2n-2\ \mathrm{times}},c)\in \R^{2n+1}.
$$
Then $S\circ\f(\R^{3})\subset L_p(\R^{2n+1})$ is
$2^{8/(1-\e)}$-doubling.
\end{lemma}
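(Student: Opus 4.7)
The plan is to exploit the self-similar structure of the pullback metric
\[
\sigma(x, y) \eqdef \|S\phi(x) - S\phi(y)\|_{L_p(\R^{2n+1})}
\]
on $\R^3 \cong \H_1$, reducing the doubling question to an explicit volume computation. First, I would verify two symmetries of $\sigma$: left-invariance $\sigma(ax, ay) = \sigma(x, y)$ under the $\H_1$-group law and dilation covariance $\sigma(\d_\theta x, \d_\theta y) = |\theta|^{1-\e}\sigma(x, y)$ for $a,x,y\in\H_1$ and $\theta\in\R$. These follow from three basic observations: $\phi:\H_1\to\H_n$ is a group homomorphism that intertwines the dilations $\d_\theta$ (direct verification from the Heisenberg product formula); left translation by any $a\in\H_n$ is an isometry of $L_p(\R^{2n+1})$ because Lebesgue is Haar on $\H_n$ (which yields $T(ax)-T(ay) = L_a(T(x)-T(y))$); and $\|T(\d_\theta y)\|_{L_p} = |\theta|^{1-\e}\|T(y)\|_{L_p}$, a change-of-variables using the Jacobian $|\theta|^{2n+2}$ of $\d_\theta$ on $\R^{2n+1}$ together with the identity $\alpha p - (2n+2) = -(1-\e)p$ from~\eqref{eq:def alpha}.

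Next, these two symmetries translate into exact Ahlfors regularity of the Lebesgue measure $\mu$ on $\R^3\cong\H_1$. Since $\mu$ is bi-invariant under the $\H_1$-group law and $\mu(\d_\theta E) = |\theta|^4\mu(E)$ (the homogeneous dimension of $\H_1$ being $4$), applying $\d_\theta$ to $B_\sigma(0, r)$ and invoking the dilation covariance of $\sigma$ gives $\mu(B_\sigma(0, |\theta|^{1-\e}r)) = |\theta|^4\mu(B_\sigma(0, r))$. Combined with translation invariance, this forces
\[
\mu(B_\sigma(x, r)) = c(\e)\cdot r^{\frac{4}{1-\e}}\qquad (\forall\, x\in\H_1,\ r>0),
\]
for some constant $c(\e)\in(0,\infty)$ whose finiteness and positivity are obtained by sandwiching $B_\sigma(0,1)$ between two Koranyi balls using Corollary~\ref{cor:bi lip done}. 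Crucially, the measure-doubling ratio $\mu(B_\sigma(x,2r))/\mu(B_\sigma(x,r))$ equals $2^{4/(1-\e)}$ exactly, independent of the possibly $\e$-degenerate prefactor $c(\e)$.

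The final step is a standard packing-and-covering argument. Taking a maximal $\sigma$-$(r/2)$-separated subset $\{x_i\}_{i\in I}\subset B_\sigma(x_0,r)$, the balls $B_\sigma(x_i, r/2)$ cover $B_\sigma(x_0,r)$ by maximality, while the balls $B_\sigma(x_i, r/4)$ are pairwise disjoint and contained in a slightly enlarged $\sigma$-ball; Ahlfors regularity then bounds $|I|$ by an expression of the form $A^{4/(1-\e)}$ for an explicit $A\ge 1$. The main remaining technical task is to sharpen this constant to $A=4$, so as to match the claimed bound $|I|\le 2^{8/(1-\e)}$; this can be arranged by packing tailored to the Koranyi-box geometry of the $\sigma$-balls (alternatively, by invoking the fact that $(\H_1, d_N)$ itself is $2^8$-doubling and transferring to the snowflake via the symmetries just established). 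The conceptual content---that the $\e$-dependent bi-Lipschitz distortion of Corollary~\ref{cor:bi lip done} drops out of the final doubling bound thanks to dilation covariance---is the main insight.
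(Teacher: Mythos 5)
Your proposal follows essentially the same route as the paper: establish the dilation covariance $\|S(\d_\theta (x))-S(\d_\theta (y))\|_{L_p}=\theta^{1-\e}\|S(x)-S(y)\|_{L_p}$ via the change of variables $z=\d_\theta(w)$ and the identity $\alpha p-(2n+2)=-(1-\e)p$, deduce exact Ahlfors regularity of the push-forward of Lebesgue measure under $S\circ\f$ (so the measure-doubling ratio is exactly $2^{4/(1-\e)}$), and conclude by the standard Coifman--Weiss packing argument. Your added care about the precise packing constant and about the finiteness and positivity of $c(\e)$ (via Corollary~\ref{cor:bi lip done}) only makes explicit what the paper delegates to the citation.
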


\begin{proof}
For $\theta\in (0,\infty)$, recalling~\eqref{eq:def delta theta} we
have
\begin{align}
\nonumber&\|S(\d_\theta(x))-S(\d_\theta(y))\|_{L_p(\R^{2n+1})}
\\\nonumber&=\left(\int_{\R^{2n+1}}\left|\frac{1}{N(\d_\theta(x^{-1})z)^\alpha}
-\frac{1}{N(\d_\theta(y^{-1})z)^\alpha}\right|^pdz\right)^{1/p}\\
&\label{eq:change variable theta}=\left(\int_{\R^{2n+1}}\left|\frac{1}{N(\d_\theta(x^{-1}w))^\alpha}
-\frac{1}{N(\d_\theta(y^{-1}w))^\alpha}\right|^p\theta^{2n+2}dw\right)^{1/p}\\\label{eq:homogebneous}
&= \theta^{\frac{2n+2-\alpha p}{p}}\left(\int_{\R^{2n+1}}\left|\frac{1}{N(x^{-1}w)^\alpha}
-\frac{1}{N(y^{-1}w)^\alpha}\right|^p\theta^{2n+2}dw\right)^{1/p}\\
&= \theta^{1-\e}\|S(x)-S(y)\|_{L_p(\R^{2n+1})}\label{eq:new metric homogeneous},
\end{align}
where~\eqref{eq:change variable theta} uses the change of variable
$z=\d_\theta (w)$ (so $dz=\theta^{2n+2}dw$),
and~\eqref{eq:homogebneous} uses the fact that
$N(\d_\theta(u))=\theta N(u)$ for every $u\in \R^{2n+1}$.
For~\eqref{eq:new metric homogeneous}, recall the definition of
$\alpha$ in~\eqref{eq:def alpha}.

Let $\mu$ be the push-forward of the Lebesgue measure on $\R^3$
under the mapping $S\circ\f$, i.e.,
$$\mu(A)\eqdef
\mathrm{vol}_3
(\f^{-1}(S^{-1}(A)))
$$ for every Borel set $A\subset
S\circ\f(\R^{3})\subset L_p(\R^{2n+1})$. For $f\in L_p(\R^{2n+1})$
and $r\in [0,\infty)$ let $B_p(f,r)$ denote the closed ball of
radius $r$ and center $f$ in $L_p(\R^{2n+1})$, i.e.,
$B_p(f,s)=\{g\in L_p(\R^{2n+1}):\ \|f-g\|_{L_p(\R^{2n+1})}\le r\}$.
By~\eqref{eq:new metric homogeneous} for every $0<r\le R<\infty$ and
every $f\in S\circ\f(\R^3)$ we have
$$
\f^{-1}\circ S^{-1}\left(B_p(f,R)\right)= \f^{-1}(S^{-1}(f))\d_{(R/r)^{1/(1-\e)}}\left(
\f^{-1}\circ S^{-1}\left(B_p(0,r)\right)\right).
$$
Consequently,
$$
\mu\left(B_p(f,R)\right)=\left(\frac{R}{r}\right)^{\frac{4}{1-\e}}
\mathrm{vol}_3\left(\f^{-1}\circ S^{-1}\left(B_p(0,r)\right)\right)>0.
$$
In particular,
$$
\forall\, f\in S\circ\f(\R^{3}),\ \forall\, r\in (0,\infty),\qquad
\frac{\mu\left(B_p(f,2r)\right)}{\mu\left(B_p(f,r)\right)}=2^{4/(1-\e)}.
$$
By a standard packing argument (see e.g.~\cite[page~67]{CW71}), this
implies that $S\circ\f(\R^{3})$ is a $2^{8/(1-\e)}$-doubling subset
of $L_p(\R^{2n+1})$.
\end{proof}

\begin{proof}[Proof of Theorem~\ref{thm:main4}]
The discrete Heisenberg group $\H$ embeds into the continuous
Heisenebrg group $\H_1$ as a co-compact discrete subgroup. Hence
(see e.g.~\cite[Thm.~8.3.19]{BBI01}) the metric space $(\H,d_W)$ is
bi-Lipschitz to a subset of $(\R^3,d_N)$. By taking the mapping
$F_{\e,p}$ of Theorem~\ref{thm:main4} to be the restriction of
$S\circ \phi$ to $\H$, the assertions of Theorem~\ref{thm:main4}
follow from Lemma~\ref{lem:image doubling} and Corollary~\ref{cor:bi
lip done} (observe that $\phi$ is an isometric embedding of
$(\R^3,d_N)$ into $(\R^{2n+1},d_N)$).
\end{proof}

\section{A representation theoretic proof of Theorem~\ref{thm:main4}}

Here we present a different proof of Theorem~\ref{thm:main4} which uses the Schr\"odinger representation of the Heisenberg group $\H_n$. In what follows it will be notationally convenient to consider the Heisenberg group $\H_n$ as being $\R^n\times \R^n\times \R$, equipped with the group product given by
$$
(u,v,w)(u',v',w')\eqdef \left(u+u',v+v',w+w'-2\langle u,v'\rangle +2\langle v,u'\rangle \right),
$$
for every $(u,v,w),(u',v',w')\in \H_n$. The corresponding Kor\`anyi norm is then given by
$$
\forall(u,v,w)\in \H_n,\qquad N(u,v,w)=\left(\left(\|u\|_2^2+\|v\|_2^2\right)^2+w^2\right)^{1/4},
$$
and for $\theta\in (0,\infty)$ the Heisenberg dilation $\d_\theta:\H_n\to \H_n$ is given by
$$
\forall(u,v,w)\in \H_n,\qquad \d_{\theta}(u,v,w)=\left(\theta u,\theta v,\theta^2 w\right).
$$

The Schr\"odinger representation of $\H_n$ corresponding to $\lambda\in (0,\infty)$ is defined as follows. For every $(u,v,w)\in \H_n$ and $h: \R^n\to \C$ define $\sigma_{\lambda}(u,v,w)h:\R^n\to \C$ by
$$
\forall\, x\in \R^n,\qquad \sigma_{\lambda}(u,v,w)h(x)\eqdef e^{i\lambda\left(w-2\langle u,v\rangle\right)+2i\sqrt{\lambda}\langle v,x\rangle}h\left(x-2\sqrt{\lambda} u\right).
$$
One checks that this defines a unitary representation of $\H_n$ on $L_2(\R^n)$, i.e., that for every $h\in L_2(\R^n)$ we have $\|\sigma(x)h\|_{L_2(\R^n)}=\|h\|_{L_2(\R^n)}$  and $\sigma_\lambda(xy)h=\sigma_\lambda(x)\sigma_\lambda(y)h$ for every $x,y\in \H_n$.

Define $g:\R^n\to \R$ to be
$$
g(x)\eqdef e^{-\frac12\|x\|_2^2},
$$
so that
$
\|g\|_{L_2(\R^n)}^2=\int_{\R^n} e^{-\|x\|_2^2}dx=\pi^{n/2}.
$
Then for every $(u,v,w)\in \H_n$ and $x\in \R^n$ we have
$$
\sigma_\lambda(u,v,w)g(x)=e^{i\lambda\left(w-2\langle u,v\rangle\right)+2i\sqrt{\lambda}\langle v,x\rangle-\frac12\|x\|_2^2+2\sqrt{\lambda}\langle u,x\rangle -2\lambda\|u\|_2^2}.
$$
Consequently,
\begin{align}\label{eq:cmputation for each lambda}
\nonumber&\left\|g-\sigma_\lambda(u,v,w)g\right\|_{L_2(\R^n)}^2\\ \nonumber&=2\|g\|_{L_2(\R^n)}^2-2\Re\left(\int_{\R^n} g(x)\sigma_\lambda(u,v,w)g(x)dx\right)\\ \nonumber
&=2\pi^{n/2}-2\Re\left(\int_{\R^n}e^{i\lambda\left(w-2\langle u,v\rangle\right)+2i\sqrt{\lambda}\langle v,x\rangle-\|x\|_2^2+2\sqrt{\lambda}\langle u,x\rangle -2\lambda\|u\|_2^2} dx\right)\\ \nonumber
&= 2\pi^{n/2}-2\Re\left(e^{i\lambda w-\lambda \left(\|u\|_2^2+\|v\|_2^2\right)}\int_{\R^n}e^{-\|x-i\sqrt{\lambda}v-\sqrt{\lambda}u\|_2^2}dx\right)\\
&= 2\pi^{n/2}\left(1-e^{-\lambda \left(\|u\|_2^2+\|v\|_2^2\right)}\cos(\lambda w)\right).
\end{align}

Let  $L_p((0,\infty),L_2(\R^n))$ denote as usual the space of all measurable mappings $F:(0,\infty)\to L_2(\R^n)$ that satisfy
$$
\|F\|_{L_p((0,\infty),L_2(\R^n))}\eqdef \left(\int_0^\infty \|F(\lambda)\|_{L_2(\R^n)}^p\right)^{1/p}<\infty.
$$
Note that since $L_2(\R^n)$ embeds isometrically into $L_p$ (see e.g.~\cite{Woj91}), also $L_p((0,\infty),L_2(\R^n))$ embeds isometrically into $L_p$. For every measurable mapping $F:(0,\infty)\to L_2(\R^n)$ and every $(u,v,w)\in \H_n$ define $\sigma(u,v,w)F:(0,\infty)\to L_2(\R^n)$ by
$$
\forall\, \lambda\in (0,\infty),\qquad \sigma(u,v,w)F(\lambda)\eqdef\sigma_\lambda(u,v,w)F.
$$
Thus $\sigma$ is an action of $\H_n$ on $L_p((0,\infty),L_2(\R^n))$ by isometries. Next, define $G:(0,\infty)\to L_2(\R^n)$ by
\begin{equation}\label{eq:def representation G}
\forall\, \lambda\in (0,\infty),\qquad G(\lambda)\eqdef \frac{g}{\sqrt{2}\pi^{\frac{n}{4}}\cdot \lambda^{\frac{1}{p}+\frac{1-\e}{2}}}.
\end{equation}

\begin{lemma}\label{lem:compute cocycle}
 We have $G-\sigma(u,v,w) G\in L_p((0,\infty),L_2(\R^n))$ for every $(u,v,w)\in \H_n$. Moreover,
\begin{align}
\nonumber&\left\|G-\sigma(u,v,w) G\right\|_{L_p((0,\infty),L_2(\R^n))}\\\label{eq:exact identity G}&=\left(\int_0^\infty \left(1-e^{-\lambda \left(\|u\|_2^2+\|v\|_2^2\right)}\cos(\lambda w)\right)^{p/2}\frac{d\lambda}{\lambda^{1+(1-\e)p/2}} \right)^{1/p}\\
\label{eq:approx identity G}&\asymp \left(\frac{1}{\e^{1/p}}+\frac{1}{(1-\e)^{1/p}}\right)\left(\|u\|_2^2+\|v\|_2^2\right)^{(1-\e)/2}+\frac{|w|^{(1-\e)/2}}{(1-\e)^{1/p}}.
\end{align}
\end{lemma}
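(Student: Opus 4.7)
\medskip

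\noindent\textbf{Proof plan.} The exact identity \eqref{eq:exact identity G} will be essentially cosmetic: by the per-$\lambda$ identity \eqref{eq:cmputation for each lambda} applied to $G(\lambda)$, which is a scalar multiple of $g$, one has $\|G(\lambda)-\sigma(u,v,w)G(\lambda)\|_{L_2(\R^n)}^2 = (1-e^{-\lambda s}\cos(\lambda w))/\lambda^{2/p+1-\e}$, where $s:=\|u\|_2^2+\|v\|_2^2$. Raising this to the $p/2$-th power and integrating in $\lambda$ will yield \eqref{eq:exact identity G}. The bulk of the work is the asymptotic \eqref{eq:approx identity G}. Write $\beta:=(1-\e)p/2$ and denote the integral in \eqref{eq:exact identity G} by $I$.

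For the upper bound, I would start from the elementary pointwise estimate $1-e^{-\lambda s}\cos(\lambda w) \le (1-e^{-\lambda s}) + (1-\cos(\lambda w))$ and then use $(a+b)^{p/2}\lesssim a^{p/2}+b^{p/2}$ to split $I$ into two positive pieces. After the rescalings $t=\lambda s$ and $t=\lambda |w|$, each piece becomes $s^\beta$ (resp.\ $|w|^\beta$) times a dimensionless integral. The near-$0$ expansions $(1-e^{-t})^{p/2}\asymp t^{p/2}$ and $(1-\cos t)^{p/2}\asymp t^p$, together with the respective asymptotics at infinity, will make the first integral $\asymp 1/(\e p)+1/((1-\e)p)$ and the second $\asymp 1/((1-\e)p)$. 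Taking $p$-th roots and absorbing $p^{1/p}$ and $2^{\pm 1/p}$ factors (which are universally bounded for $p\ge 1$) into the implicit constants will give the asserted upper bound.

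For the lower bound I plan to use three pointwise inequalities valid for all $\lambda,s\ge 0$ and $w\in\R$: \textbf{(L1)}~$1-e^{-\lambda s}\cos(\lambda w)\ge 1-e^{-\lambda s}$ (since $\cos\le 1$ and $e^{-\lambda s}\ge 0$); \textbf{(L2)}~if $\cos(\lambda w)\ge 0$ then $1-e^{-\lambda s}\cos(\lambda w)\ge 1-\cos(\lambda w)$; and \textbf{(L3)}~if $\cos(\lambda w)\le 1/2$ then $1-e^{-\lambda s}\cos(\lambda w)\ge 1/2$. Integrating (L1) against $\lambda^{-1-\beta}$ and reusing the upper-bound computation will yield $I^{1/p}\gtrsim s^{(1-\e)/2}(1/\e^{1/p}+1/(1-\e)^{1/p})$. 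For the $|w|$-contribution I will split by the size of $\beta$. When $\beta\le 1$, inequality (L3) gives $I\ge (1/2)^{p/2}|w|^\beta\int_{\{t>0:\cos t\le 1/2\}}t^{-1-\beta}\,dt$, and the integral is $\asymp\zeta(1+\beta)\asymp 1/\beta$ (by the $2/3$-density of $\{\cos\le 1/2\}$ at infinity together with the pole expansion of $\zeta$), yielding $I^{1/p}\gtrsim |w|^{(1-\e)/2}/(1-\e)^{1/p}$. When $\beta\ge 1$, the factor $1/(1-\e)^{1/p}$ is itself universally bounded (the map $x\mapsto(2/x)^{1/x}$ is bounded below on $[1,\infty)$, with minimum at $x=2e$), so it will suffice to prove $I^{1/p}\gtrsim |w|^{(1-\e)/2}$, and this will follow from (L2) on $\lambda\in(0,1/|w|)$ via $1-\cos(\lambda w)\gtrsim (\lambda w)^2$.

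The main obstacle I anticipate is this $|w|$-term of the lower bound: the clean ``$\cos\le 1/2$'' argument loses its $1/\beta$ factor once $\beta$ becomes large, because $\{t\ge c:\cos t\le 1/2\}$ then contributes only $\asymp (c/\pi)^{-\beta}/\beta$, which decays in $\beta$. This forces the case split above, which crucially relies on the fact that the ``bad'' prefactor $1/(1-\e)^{1/p}$ is itself tame in the complementary regime $\beta\gtrsim 1$.
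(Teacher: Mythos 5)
Your proposal is correct, and for the exact identity and for the upper bound in \eqref{eq:approx identity G} it follows the paper's argument essentially verbatim: the same per\nobreakdash-$\lambda$ computation, the same pointwise splitting $1-e^{-\lambda s}\cos(\lambda w)\le(1-e^{-\lambda s})+(1-\cos(\lambda w))$, and the same two rescaled one-dimensional integrals \eqref{eq:first t integral} and \eqref{eq:second t integral}; just make sure that every $p$-dependent constant (e.g.\ the $2^{p/2-1}$ from $(a+b)^{p/2}\le 2^{p/2-1}(a^{p/2}+b^{p/2})$, and the constants in $(1-e^{-t})^{p/2}\asymp t^{p/2}$ and $(1-\cos t)^{p/2}\asymp t^{p}$ near $0$) is only absorbed \emph{after} taking $p$-th roots, as you indicate. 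Where you genuinely diverge is the lower bound. The paper proves the single two-sided elementary inequality
\begin{equation*}
(a,b)\in (0,1)\times (-1,1)\implies \frac13 \le \frac{1-ab}{(1-a)+(1-b)}\le 1
\end{equation*}
(the left-hand side amounting to $(1+b)(1-a)+2a(1-b)\ge 0$), applied with $a=e^{-\lambda s}$ and $b=\cos(\lambda w)$; this makes the integrand genuinely comparable to $(1-e^{-\lambda s})+(1-\cos(\lambda w))$ pointwise, so both directions of \eqref{eq:approx identity G} reduce to the same two integrals with no case analysis. Your route through (L1)--(L3) and the dichotomy on $\beta=(1-\e)p/2$ is valid --- (L1) recovers the $\bigl(\|u\|_2^2+\|v\|_2^2\bigr)^{(1-\e)/2}$ term; for $\beta\le 1$ your (L3) together with the positive density of $\{\cos\le 1/2\}$ gives $\zeta(1+\beta)^{1/p}\asymp(1/\beta)^{1/p}\asymp(1-\e)^{-1/p}$; and for $\beta\ge 1$ one indeed has $(1-\e)^{-1/p}\le(p/2)^{1/p}\lesssim 1$, so (L2) on $\lambda\in(0,1/|w|)$ suffices --- but it is longer, and the case split is avoidable. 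Note also that the oscillation issue you flag does not entirely disappear in the paper's approach: the lower bound implicit in \eqref{eq:second t integral} still requires $\int_1^\infty (1-\cos t)^{p/2}\,t^{-1-(1-\e)p/2}\,dt\gtrsim^{1/p} 1/\beta$, which the paper obtains by summing $\int_{(2k-1)\pi/2}^{(2k+1)\pi/2}t^{-1-(1-\e)p/2}\,dt$ over the half-periods where $\cos t\le 0$ and comparing to $\sum_k k^{-1-(1-\e)p/2}\asymp 1+1/\beta$; this is essentially your (L3) idea, and it works uniformly in $\beta$ because the $k=1$ term alone already contributes $\gtrsim 1$ after the $p$-th root, while $1/\beta\le 1$ in the regime where $\zeta(1+\beta)$ stops blowing up.
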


\begin{proof}
The identity~\eqref{eq:exact identity G}  is  a substitution of~\eqref{eq:def representation G} into~\eqref{eq:cmputation for each lambda}. Note that
\begin{equation}\label{eq:ab less than 1}
(a,b)\in (0,1)\times (-1,1)\implies \frac13 \le \frac{1-ab}{(1-a)+(1-b)}\le 1.
\end{equation}
 Indeed, the leftmost inequality in~\eqref{eq:ab less than 1} is equivalent to the inequality $(1+b)(1-a)+2a(1-b)\ge 0$, and the rightmost  inequality in~\eqref{eq:ab less than 1} is equivalent to the inequality $(1-a)(1-b)\ge 0$. It follows from~\eqref{eq:ab less than 1} that for every $\lambda\in (0,\infty)$ we have
\begin{equation*}
1-e^{-\lambda \left(\|u\|_2^2+\|v\|_2^2\right)}\cos(\lambda w)\asymp \left(1-e^{-\lambda \left(\|u\|_2^2+\|v\|_2^2\right)}\right)+ \left(1-\cos(\lambda w)\right).
\end{equation*}
Hence,
\begin{align}\label{eq:closed form new embedding}
\nonumber&\left(\int_0^\infty \left(1-e^{-\lambda \left(\|u\|_2^2+\|v\|_2^2\right)}\cos(\lambda w)\right)^{p/2}\frac{d\lambda}{\lambda^{1+(1-\e)p/2}} \right)^{1/p}\\
\nonumber& \asymp \left(\int_0^\infty \left(1-e^{-\lambda \left(\|u\|_2^2+\|v\|_2^2\right)}\right)^{p/2}\frac{d\lambda}{\lambda^{1+(1-\e)p/2}} \right)^{1/p}\\\nonumber&\quad\quad+
\left(\int_0^\infty \left(1-\cos(\lambda w)\right)^{p/2}\frac{d\lambda}{\lambda^{1+(1-\e)p/2}} \right)^{1/p}\\
\nonumber&= \left(\|u\|_2^2+\|v\|_2^2\right)^{(1-\e)/2}\left(\int_0^\infty \frac{\left(1-e^{-t}\right)^{p/2}}{t^{1+(1-\e)p/2}}dt\right)^{1/p}\\&\quad\quad +|w|^{(1-\e)/2}\left(\int_0^\infty \frac{\left(1-\cos t\right)^{p/2}}{t^{1+(1-\e)p/2}}dt\right)^{1/p}.
\end{align}

Note that
\begin{align}\label{eq:first t integral}
\nonumber \left(\int_0^\infty \frac{\left(1-e^{-t}\right)^{p/2}}{t^{1+(1-\e)p/2}}dt\right)^{1/p}&\asymp \left(\int_0^1 \frac{dt}{t^{1-\e p/2}}+\int_1^\infty \frac{dt}{t^{1+(1-\e)p/2}}\right)^{1/p}\\
&\asymp \frac{1}{\e^{1/p}}+\frac{1}{(1-\e)^{1/p}},
\end{align}
and
\begin{align}\label{eq:second t integral}
\nonumber \left(\int_0^\infty \frac{\left(1-\cos t\right)^{p/2}}{t^{1+(1-\e)p/2}}dt\right)^{1/p}&\asymp \left(\int_0^1 t^{(1+\e)p/2-1}dt+\int_1^\infty \frac{dt}{t^{1+(1-\e)p/2}}\right)^{1/p}\\
&\asymp \frac{1}{(1-\e)^{1/p}}
\end{align}

By combining~\eqref{eq:closed form new embedding} with~\eqref{eq:first t integral} and~\eqref{eq:second t integral} we obtain~\eqref{eq:approx identity G}.
\end{proof}

\begin{proof}[Second proof of Theorem~\ref{thm:main4}]
Fix an arbitrary isometric embedding $J$ of $L_p((0,\infty),L_2(\R^n))$ into $L_p$ and
define $Q:H_n\to L_p$ by
$$
\forall, x\in \H_n,\qquad Q(x)\eqdef (1-\e)^{1/p}J\left(G-\sigma(x)G\right).
$$
By Lemma~\ref{lem:compute cocycle}, for every $\theta\in (0,\infty)$ and every $(u,v,w)\in \H_n$,
\begin{align*}
\|Q(\d_\theta(u,v,w))\|_p^p&=\int_0^\infty \frac{\left(1-e^{-\lambda \left(\theta^2\|u\|_2^2+\theta^2\|v\|_2^2\right)}\cos(\lambda \theta^2 w)\right)^{p/2}}{\lambda^{1+(1-\e)p/2}} d\lambda \\
&= \theta^{(1-\e)p}\int_0^\infty \frac{\left(1-e^{-s \left(\|u\|_2^2+\|v\|_2^2\right)}\cos(\lambda  s)\right)^{p/2}}{s^{1+(1-\e)p/2}} ds\\&= \theta^{(1-\e)p} \|Q(u,v,w)\|_p^p.
\end{align*}
Since $\sigma$ is an action of $\H_n$ on $L_p((0,\infty),L_2(\R^n))$ by isometries, for every $x,y\in \H_n$ we have $\|Q(x)-Q(y)\|_p=\|Q(x^{-1}y)\|_p$, and it therefore follows that $\|Q(\d_\theta(x))-Q(\d_\theta(y))\|_p=\theta^{1-\e}\|Q(x)-Q(y)\|_p$ for every $x,y\in \H_n$ and $\theta\in (0,\infty)$. Arguing exactly as in the proof of Lemma~\ref{lem:image doubling}, it follows that $Q(\H_1)\subseteq L_p$ is a $2^{8/(1-\e)}$-doubling subset of $L_p$. It remains to note that by Lemma~\ref{lem:compute cocycle}, for every $x,y\in \H_n$ we have
\begin{equation*}
d_N(x,y)^{1-\e}\lesssim \|Q(x)-Q(y)\|_p\lesssim \left(1+\frac{(1-\e)^{1/p}}{\e^{1/p}}\right)d_N(x,y)^{1-\e}.\qedhere
\end{equation*}
\end{proof}

\subsection*{Acknowledgements}
 V.~L. was supported by ANR grant KInd. A.~N. was supported by NSF
grant CCF-0832795, BSF grant 2010021, the Packard Foundation and the
Simons Foundation. Part of this work was completed while A.~N. was a
Visiting Fellow at Princeton University.


\bibliographystyle{alphaabbrvprelim}
\bibliography{Lp-doubling}

\end{document}